\newtheorem {theorem} {Theorem}[section]
\newtheorem {proposition} [theorem]{Proposition}
\newtheorem {corollary} [theorem]{Corollary}
\newtheorem {lemma}  [theorem]{Lemma}
\numberwithin{equation}{section}
\theoremstyle{definition}
\newtheorem{df}{Definition}
\theoremstyle{remark}
\newtheorem{rem}{Remark}
\newcommand{\al}{\alpha}
\newcommand{\clk}{\sigma}
\newcommand{\plh}{PM}
\newcommand{\diin}{\theta}
\newcommand{\za}{\zeta}
\newcommand{\ph}{\varphi}
\newcommand{\hol}{\mathcal{H}ol}
\newcommand{\Rl}{\mathrm{Re\,}}
\newcommand{\Dbb}{\mathbb D}
\newcommand{\Tbb}{\mathbb T}
\newcommand{\dn}{{\mathbb D}^n}
\newcommand{\tn}{{\mathbb T}^n}
\newcommand{\mn}{m_n}
\newcommand{\Cbb}{\mathbb C}
\newcommand{\Zbb}{\mathbb Z}
\newcommand{\kla}{I^*(H^2)}
\begin{document}

\title[Clark measures]{Clark measures on the torus}

\author{Evgueni Doubtsov}

\address{St.~Petersburg Department
of Steklov Mathematical Institute, Fontanka 27, St.~Petersburg 191023, Russia and}

\address{
Department of Mathematics and Computer Science,
St.~Petersburg State University,
Line 14th (Vasilyevsky Island), 29, St.~Petersburg 199178,
Russia}

\email{dubtsov@pdmi.ras.ru}

\thanks{This research was supported by the Russian Science Foundation (grant No.~19-11-00058).}

\subjclass[2010]{Primary 30J05, 32A35; Secondary 31C10, 46E27, 46J15}

\keywords{Hardy space, polydisc, inner function, model space, pluriharmonic measure}

\begin{abstract}
Let $\mathbb{D}$ denote the unit disc of $\mathbb{C}$ and let $\mathbb{T}= \partial\mathbb{D}$.
Given a holomorphic function $\varphi: \mathbb{D}^n \to \mathbb{D}$, $n\ge 2$, we study the corresponding family
$\sigma_\alpha[\varphi]$, $\alpha\in\mathbb{T}$,
of Clark measures on the torus $\mathbb{T}^n$.
If $\varphi$ is an inner function, then we introduce
and investigate related isometric operators $T_\alpha$
mapping analogs of model spaces into $L^2(\sigma_\alpha)$, $\alpha\in\mathbb{T}$.
\end{abstract}

\maketitle

\section{Introduction}\label{s_int}

Let $\Dbb$ denote the open unit disc of $\Cbb$ and let $\Tbb=\partial\Dbb$.
For $n=1,2, \dots$, the equality
\[
C(z, \za) = \prod_{j=1}^{n}\frac{1}{1- z_j \overline{\za}_j}, \quad z\in\dn,\ \za\in\tn,
\]
defines the Cauchy kernel for $\dn$.
The corresponding Poisson kernel is given by the formula
\[
P(z, \za) = \frac{C(z, \za) C(\za, z)}{C(z, z)},
\quad z\in\dn,\ \za\in\tn.
\]
Let $M(\tn)$ denote the space of complex Borel measures on the torus $\tn$.
For $\mu\in M(\tn)$, the Cauchy transform $\mu_+$ is defined as
\[
  \mu_+(z) = \int_{\tn} C(z, \xi)\, d\mu(\xi), \quad z\in \dn.
\]

\subsection{Pluriharmonic measures}
A measure $\mu\in M(\tn)$, $n\ge 2$, is called \textsl{pluriharmonic} if the Poisson integral
\[
P[\mu](z) = \int_{\tn} P(z, \za)\, d\mu(\za), \quad z\in\dn,
\]
is a pluriharmonic function.
Let $\plh(\tn)$ denote the set of all pluriharmonic measures.
Observe that $\mu\in\plh(\tn)$
if and only if the Fourier coefficients of $\mu$
are equal to zero outside the set $(-\Zbb_+^n) \cup \Zbb_+^n$.

\subsection{Clark measures}
Given an $\alpha\in\Tbb$ and a holomorphic function $\ph: \dn\to \Dbb$, the quotient
\[
\frac{1-|\ph(z)|^2}{|\al-\ph(z)|^2}= \Rl \left(\frac{\al+ \ph(z)}{\al- \ph(z)} \right), \quad z\in \dn,
\]
is positive and pluriharmonic.
Therefore, there exists a unique positive measure $\clk_\al= \clk_\al[\ph] \in M(\tn)$
such that
\[
P[\clk_\al](z) = \Rl \left(\frac{\al+ \ph(z)}{\al- \ph(z)} \right), \quad z\in \dn.
\]
By the definition of $\plh(\tn)$, we have $\clk_\al\in \plh(\tn)$.

After the seminal paper of Clark \cite{Cl72},
various properties and applications of the measures $\clk_\al$ on the unit circle $\Tbb$ have been obtained;
see, for example, reviews \cite{MM06, PS06, GR15} for further references.
To the best of the author's knowledge, the measures $\clk_\al$ on $\tn$, $n\ge 2$, have not been
investigated earlier.
In particular, in the present paper, we show that the properties of $\clk_\al$ on $\tn$ are quite different from
those of $\clk_\al$
on the unit circle $\Tbb$ and
on the unit sphere $S_n$ of $\mathbb{C}^n$, $n\ge 2$
(cf.\ \cite{ADcras}).

\subsection{Clark measures and model spaces}
Let $\mn$ denote the normalized Le\-besgue measure on $\tn$.

\begin{df}\label{d_inner}
A holomorphic function $I:\dn \to \Dbb$ is called \textsl{inner}
if $|I(\za)|=1$ for $\mn$-a.e. $\za\in\tn$.
\end{df}

In the above definition,
$I(\za)$ stands, as usual, for
$\lim_{r\to 1-} I(r\za)$.
Recall that the corresponding limit exists $\mn$-a.e.
Also, by the above definition, every inner function is non-constant.
The present paper is primarily motivated by the studies of Clark \cite{Cl72}
related to the measures $\clk_\al [\ph] \in M(\Tbb)$, $\al\in\Tbb$,
where $\ph$ is an inner function in $\Dbb$.

Given an inner function $I$ in $\dn$, we have
\[
P[\clk_\al](\za) =\frac{1-|I(\za)|^2}{|\al-I(\za)|^2}=0 \quad \mn\textrm{-a.e.},
\]
therefore, $\clk_\al = \clk_\al[I]$ is a singular measure. Here and in what follows, this means that
$\clk_\al$ and $\mn$ are mutually singular; in brief, $\clk_\al \bot\mn$.
Standard properties of Poisson integrals guarantee that $\clk_\al$, $\al\in\Tbb$, is supported
by the set
$E_\al = \{\za\in \tn: I(\za)=\al\}$.
In particular, $\clk_\al \bot \clk_\beta$ for $\al\neq \beta$.

For $n\ge 1$, let $\hol(\dn)$ denote the space of holomorphic functions in $\dn$.
For $0<p<\infty$, the classical Hardy space $H^p=H^p(\dn)$ consists of those $f\in \hol(\dn)$ for which
\[
\|f\|_{H^p}^p = \sup_{0<r<1} \int_{\tn} |f(r\za)|^p\, d\mn(\za) < \infty.
\]
As usual, we identify the Hardy space $H^p(\dn)$, $p>0$, and the space
$H^p(\tn)$ of the corresponding boundary values.

For an inner function $\diin$ on $\Dbb$, the classical
model space $K_\diin$ is defined as $K_\diin = H^2(\Tbb)\ominus \diin H^2(\Tbb)$.
Clark \cite{Cl72} introduced and studied a family of unitary operators
$U_\al : K_\diin \to L^2(\clk_\al)$, $\al\in\Tbb$.

For an inner function $I$ in $\dn$, $n\ge 2$, there are several analogs of $K_\diin$.
We consider the following direct analog of $K_\diin$:
\[
  \kla = H^2 \ominus I H^2.
\]
In this paper, we define isometric operators
\[
T_\al: \kla\to L^2(\clk_\al), \quad \al\in\Tbb,
\]
and we address the problem whether $T_\al$ is a unitary operator for a given inner function $I$.
Also, we give examples of inner functions with different sets
$\{\al\in\Tbb: T_\al\ \textrm{extends to a unitary operator}\}$.

\subsection*{Organization of the paper}
Auxiliary properties of Clark measures are obtained in Section~\ref{s_aux_clk}.
In particular, we prove two decomposition results: each pluriharmonic measure decomposes in terms of its slices,
and Lebesgue measure $\mn$ disintegrates in terms of Clark measures.
The isometries $T_\al: \kla\to L^2(\clk_\al)$ are introduces and studied in Section~\ref{s_clk_inner}.
Explicit examples of Clark measures are given in the final Section~\ref{s_ex}.

\subsection*{Acknowledgement}
The author is grateful to Aleksei B.\ Aleksandrov
for helpful suggestions and remarks.

\section{Basic properties of Clark measures}\label{s_aux_clk}

The following lemma is standard, so we omit its proof.

\begin{lemma}\label{l_MS_wlim}
Let $\{\mu_k\}_{k=1}^\infty$ be a bounded sequence
in $M(\tn)$ and let $\mu\in M(\tn)$. Then $\lim\limits_{k\to\infty}\mu_k=\mu$
in $\clk(M(\tn), C(\tn))$-topology if and only if
$\lim\limits_{k\to\infty} {P}[\mu_k](z)= {P}[\mu](z)$ for all $z\in\dn$.
\end{lemma}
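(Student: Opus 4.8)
The plan is to prove the two implications separately: the forward one is immediate from continuity of the kernel, while the reverse one combines a weak-$*$ compactness argument with the injectivity of the Poisson transform on $M(\tn)$. For the forward implication, fix $z\in\dn$. Since $|z_j|<1$ for each $j$, the denominators $|1-z_j\overline{\za}_j|$ are bounded away from zero uniformly in $\za\in\tn$, so $\za\mapsto P(z,\za)$ is continuous on $\tn$. Hence $\mu_k\to\mu$ in $\clk(M(\tn),C(\tn))$-topology yields $P[\mu_k](z)=\int_{\tn}P(z,\za)\,d\mu_k(\za)\to\int_{\tn}P(z,\za)\,d\mu(\za)=P[\mu](z)$ at once.

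For the reverse implication, suppose $P[\mu_k](z)\to P[\mu](z)$ for all $z\in\dn$. I would argue by subsequence extraction. Since $\{\mu_k\}$ is bounded and $M(\tn)=C(\tn)^*$ with $C(\tn)$ separable, Banach--Alaoglu provides a weak-$*$ compact, metrizable ball containing the whole sequence. It therefore suffices to show that every weak-$*$ convergent subsequence $\mu_{k_j}\to\nu$ satisfies $\nu=\mu$: metrizability then forces $\mu_k\to\mu$. Applying the forward implication to the subsequence gives $P[\nu]=\lim_j P[\mu_{k_j}]=P[\mu]$ on $\dn$, the last equality being the hypothesis.

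The crux is the resulting uniqueness statement, that $P[\nu]=P[\mu]$ implies $\nu=\mu$; equivalently, that the Poisson transform is injective on $M(\tn)$. I would prove it by recovering each Fourier coefficient $\widehat{\mu}(k)$, $k\in\Zbb^n$, from the function $z\mapsto P[\mu](z)$: expanding $P(z,\za)$ as a product of one-variable Poisson kernels and integrating term by term expresses the Taylor--Fourier data of $P[\mu]$ in terms of $\widehat{\mu}(k)$, so equal Poisson integrals force equal Fourier coefficients and hence equal measures. I expect this injectivity to be the only point needing care; the weak-$*$ compactness and the continuity of the kernel are routine.
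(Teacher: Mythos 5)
Your proof is correct, and it is essentially the argument the paper has in mind: the paper explicitly omits the proof of this lemma as ``standard,'' and the standard proof is exactly your route --- the forward implication from continuity of $\za\mapsto P(z,\za)$ on $\tn$, and the reverse implication by Banach--Alaoglu with metrizability of the ball (using separability of $C(\tn)$), reducing to injectivity of the Poisson transform, which your Fourier-coefficient recovery establishes since $\theta\mapsto P[\mu](re^{i\theta})$ has coefficients $\widehat{\mu}(k)\prod_j r_j^{|k_j|}$ and trigonometric polynomials are dense in $C(\tn)$. All steps, including the term-by-term integration (justified by absolute convergence of $\sum_{k\in\Zbb^n}\prod_j r_j^{|k_j|}$ for fixed $r$), are sound.
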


\begin{corollary}\label{c_slices_wcont}
Let $\ph: \dn\to \Dbb$ be a holomorphic function. The
mapping $\al\mapsto\clk_\al[\ph]$
is continuous from $\Tbb$ into the space $M(\tn)$ endowed with the weak topology.
\end{corollary}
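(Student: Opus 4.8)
The plan is to invoke Lemma~\ref{l_MS_wlim}, which reduces weak convergence of a \emph{bounded} sequence of measures to pointwise convergence of the associated Poisson integrals. Since $\Tbb$ is metrizable, continuity is equivalent to sequential continuity, so it suffices to fix $\al\in\Tbb$, take an arbitrary sequence $\al_k\to\al$ in $\Tbb$, and show that $\clk_{\al_k}[\ph]\to\clk_\al[\ph]$ in the $\clk(M(\tn), C(\tn))$-topology.

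The first step is to verify the boundedness hypothesis. Each $\clk_\al[\ph]$ is a positive measure, so its norm equals its total mass; since $P(0,\za)\equiv 1$, evaluating the defining Poisson integral at the origin gives
\[
\clk_\al[\ph](\tn) = P[\clk_\al](0) = \Rl\left(\frac{\al+\ph(0)}{\al-\ph(0)}\right) = \frac{1-|\ph(0)|^2}{|\al-\ph(0)|^2}.
\]
Because $\ph$ maps into $\Dbb$, we have $|\ph(0)|<1$, so the denominator is bounded below by $(1-|\ph(0)|)^2>0$ uniformly in $\al\in\Tbb$. Hence the total masses are uniformly bounded, and in particular $\{\clk_{\al_k}\}$ is a bounded sequence in $M(\tn)$.

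The second step is pointwise convergence of the Poisson integrals. For each fixed $z\in\dn$ one has $P[\clk_\al](z) = \Rl\left(\frac{\al+\ph(z)}{\al-\ph(z)}\right)$, and since $|\ph(z)|<1$ the denominator $\al-\ph(z)$ never vanishes for $\al\in\Tbb$. Thus $\al\mapsto P[\clk_\al](z)$ is continuous, and therefore $P[\clk_{\al_k}](z)\to P[\clk_\al](z)$ for every $z\in\dn$. Combining this with the boundedness established above, Lemma~\ref{l_MS_wlim} yields $\clk_{\al_k}\to\clk_\al$ in the $\clk(M(\tn), C(\tn))$-topology, which is exactly the required sequential continuity.

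I do not anticipate a genuine obstacle here, as the statement is a direct consequence of the preceding lemma. The only point requiring care is checking the boundedness premise of Lemma~\ref{l_MS_wlim}, and this is precisely where the strict inequality $|\ph(0)|<1$ (guaranteed by the hypothesis $\ph:\dn\to\Dbb$) is used; the remainder is a routine continuity argument together with the metrizability of $\Tbb$.
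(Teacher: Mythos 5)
Your proof is correct and follows exactly the route the paper intends: the corollary is stated as an immediate consequence of Lemma~\ref{l_MS_wlim}, and your argument --- uniform bound on the total masses via $\|\clk_\al\| = P[\clk_\al](0) \le \frac{1+|\ph(0)|}{1-|\ph(0)|}$, pointwise continuity of $\al \mapsto \Rl\left(\frac{\al+\ph(z)}{\al-\ph(z)}\right)$ since $\al - \ph(z) \neq 0$ on $\Tbb$, and reduction to sequences by metrizability of $\Tbb$ --- is precisely the verification the paper leaves to the reader. No gaps.
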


Next, we obtain a result related to all pluriharmonic measures.

\subsection{A decomposition theorem for pluriharmonic measures}
It is well known that
\begin{equation}\label{e_Leb_dcm}
\int_{\tn} f\, d\mn = \int_{\tn} \int_{\tn} f \, dm_1^\za\, d\mn(\za), \quad f\in C(\tn),
\end{equation}
where $m_1^\za \in M(\tn)$, $\za\in\tn$, is the normalized one-dimensional Lebesgue measure
supported by the unit circle $\Tbb\za \subset \tn$
and considered as a measure on $\tn$.

Now, let $\mu$ be a pluriharmonic measure on $\tn$. Put $u=P[\mu]$
and $u_r(\za) = u(r\za)$, $0\le r <1$, $\za\in\tn$.
For $\za\in\tn$, the slice function $u_\za$ is defined as $u_\za(\lambda) = u(\lambda\za)$, $\lambda\in\Dbb$.
Since $u$ is pluriharmonic, $u_\za$ is harmonic for all $\za\in\tn$.
Also, by the monotone convergence theorem
\begin{equation}\label{e_u_xi}
\begin{aligned}
\int_{\tn}\sup_{0<r<1}\|(u_\za)_r\|_{L^1(\Tbb)}\, d\mn(\za)
&= \int_{\tn}\lim_{r\to 1-}\|(u_\za)_r\|_{L^1(\Tbb)}\, d\mn(\za) \\
&= \lim_{r\to 1-} \|u_r\|_{L^1(\tn)} < \infty.
\end{aligned}
\end{equation}
Hence, for $\mn$-a.e.\ $\za\in\tn$, we have
\[
\sup_{0<r<1} \|(u_\za)_r\|_{L^1(\Tbb)} <\infty.
\]
Therefore, for $\mn$-a.e.\ $\za\in\tn$,
there exists $\mu_\za\in M(\tn)$ such that
$\textrm{supp\,}\mu_\za \subset \Tbb\za$ and
\[
u_{\za}(\lambda) = \int_{\tn} \frac{1-|\lambda|^2}{|w- \lambda\za|^2}\, d\mu_\za(w), \quad \lambda\in\Dbb.
\]
If $\mu$ is a positive pluriharmonic measure,
in particular, a Clark measure, then the slice measure $\mu_\za$ is clearly defined for any $\za\in\tn$.

Using \eqref{e_u_xi}, we also conclude that
\begin{equation}\label{e_muz_norm}
\int_{\tn} \|\mu_\za\|\, d\mn(\za) < \infty
\end{equation}
for any $\mu\in \plh (\tn)$.

\begin{proposition}\label{p_slices}
  Let $\mu\in\plh(\tn)$ and let $\mu_\za$ denote the slice measure defined as above for $\mn$-a.e.\ $\za\in\tn$.
  Then
  \[
  \mu = \int_{\tn} \mu_\za\, d\mn(\za)
  \]
in the following weak sense:
\[
  \int_{\tn} f d\mu = \int_{\tn} \int_{\tn} f d\mu_\za\, d\mn(\za)
  \]
for all $f\in C(\tn)$.
\end{proposition}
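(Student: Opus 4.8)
The plan is to recover $\mu$ as the weak-$*$ limit of its dilated Poisson integrals and then to disintegrate each such integral by means of \eqref{e_Leb_dcm}. Write $u=P[\mu]$ and $u_r(\za)=u(r\za)$. It is standard that $u_r\,d\mn\to\mu$ in the $\clk(M(\tn),C(\tn))$-topology as $r\to1-$: the family $\{u_r\,d\mn\}$ is norm-bounded (by $\|\mu\|$), and since $z\mapsto u(rz)$ is continuous on $\overline{\dn}$ and pluriharmonic inside, it is the Poisson integral of its boundary values, i.e.\ $P[u_r\,d\mn](z)=u(rz)\to u(z)=P[\mu](z)$ for every $z\in\dn$; applying Lemma~\ref{l_MS_wlim} along any sequence $r_k\to1-$ gives the claim. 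Consequently, for $f\in C(\tn)$,
\[
\int_{\tn} f\,d\mu=\lim_{r\to1-}\int_{\tn} f(\za)\,u(r\za)\,d\mn(\za).
\]
For each fixed $r<1$ the function $\za\mapsto f(\za)u(r\za)$ is continuous on $\tn$, so \eqref{e_Leb_dcm} applies and rewrites the last integral as
\[
\int_{\tn}\left(\int_{\tn} f(w)\,u(rw)\,dm_1^\za(w)\right)d\mn(\za).
\]

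Next I would analyze, for fixed $\za$, the inner integral $J_r(\za)=\int_{\tn} f(w)\,u(rw)\,dm_1^\za(w)$. For $w=\lad\za\in\Tbb\za$ we have $u(rw)=u_\za(r\lad)=(u_\za)_r(\lad)$, so the measures $u(r\,\cdot)\,dm_1^\za$ are exactly the Poisson dilations of the slice function $u_\za$, realized on $\Tbb\za$. For $\mn$-a.e.\ $\za$ the function $u_\za$ is harmonic on $\Dbb$ with $\sup_{0<r<1}\|(u_\za)_r\|_{L^1(\Tbb)}<\infty$, and $\mu_\za$ is precisely its boundary measure; hence $u(r\,\cdot)\,dm_1^\za\to\mu_\za$ weak-$*$ as $r\to1-$ by the one-dimensional theory. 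Since $f$ is continuous, it follows that $J_r(\za)\to\int_{\tn} f\,d\mu_\za$ for $\mn$-a.e.\ $\za$.

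Finally I would justify moving the limit inside the outer $\mn$-integral. The estimate
\[
|J_r(\za)|\le \|f\|_\infty\int_{\tn}|u(rw)|\,dm_1^\za(w)=\|f\|_\infty\,\|(u_\za)_r\|_{L^1(\Tbb)}\le \|f\|_\infty\sup_{0<r<1}\|(u_\za)_r\|_{L^1(\Tbb)}
\]
provides a dominating function that is $\mn$-integrable, precisely by \eqref{e_u_xi} (equivalently \eqref{e_muz_norm}). The dominated convergence theorem then yields
\[
\int_{\tn} f\,d\mu=\int_{\tn}\int_{\tn} f\,d\mu_\za\,d\mn(\za),
\]
which is the assertion. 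The one genuinely delicate step is this interchange of limit and integration: it is the integrability bound \eqref{e_u_xi}, rather than the mere $\mn$-a.e.\ finiteness of the slice norms, that supplies the dominating function, so I would be careful to invoke it explicitly.
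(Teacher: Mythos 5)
Your proof is correct and follows essentially the same route as the paper's: expand $\int f u_r\,d\mn$ via \eqref{e_Leb_dcm}, pass to the limit $r\to1-$ using the weak-$*$ convergences $u_r\,\mn\to\mu$ and $u_r\,m_1^\za\to\mu_\za$, and interchange limit and integral by dominated convergence with the $\mn$-integrable dominating function supplied by \eqref{e_u_xi}/\eqref{e_muz_norm}. Your version simply spells out the justifications (Lemma~\ref{l_MS_wlim} for the global weak-$*$ limit, the explicit bound on $J_r(\za)$) that the paper leaves implicit.
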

\begin{proof}
Let $f\in C(\tn)$.
Given a measure $\mu\in \plh(\tn)$,
put $u = P[\mu]$.
By \eqref{e_Leb_dcm},
\[
\int_{\tn} f u_r\, d\mn = \int_{\tn} \int_{\tn} f u_r\, dm_1^\za\, d\mn(\za), \quad 0<r<1.
\]
Observe that $u_r \mn \to \mu$ and $u_r m_1^\za\to \mu_\za$
as $r\to 1-$
in $\sigma(M(\tn), C(\tn))$-topology for $\mn$-a.e.\ $\za\in\tn$.
So, taking the limit as $r\to 1-$
and applying \eqref{e_muz_norm}, we obtain
\[
\int_{\tn} f\, d\mu = \int_{\tn} \int_{\tn} f \, d\mu_\za\, d\mn(\za)
\]
by the dominated convergence theorem.
\end{proof}

\subsection{A disintegration theorem for Clark measures}
Proposition~\ref{p_slices}
indicates that the Clark measures on $\tn$ could inherit various properties of
the classical Clark measures on the unit circle.
As an illustration, we prove the following analog of the disintegration theorem
obtained in \cite{Aab87} for $d=1$.

\begin{proposition}\label{t_desint}
Let $\ph: \dn\to\Dbb$ be a holomorphic function and
let $\clk_\al = \clk_\al[\ph]$, $\al\in \Tbb$.
Then
\[
\int_{\Tbb}
\int_{\tn}  f\, d\clk_\al\, d m_1(\al) = \int_{\tn} f\, d\mn
\]
for all $f\in C(\tn)$.
\end{proposition}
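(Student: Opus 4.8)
The plan is to recognize that the asserted identity says precisely that the weakly defined measure $\nu:=\int_\Tbb\clk_\al\,dm_1(\al)$ coincides with $\mn$, and to verify this by comparing Poisson integrals. First I would check that $\nu$ is a genuine element of $M(\tn)$: by Corollary~\ref{c_slices_wcont} the map $\al\mapsto\int_\tn g\,d\clk_\al$ is continuous on $\Tbb$ for every $g\in C(\tn)$, so the functional $g\mapsto\int_\Tbb\int_\tn g\,d\clk_\al\,dm_1(\al)$ is well defined and bounded (note $|\int_\tn g\,d\clk_\al|\le\|g\|_\infty\,\clk_\al(\tn)$, with $\al\mapsto\clk_\al(\tn)$ continuous, hence bounded on the compact $\Tbb$), and is therefore represented by some $\nu\in M(\tn)$. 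In this notation the statement to be proved is exactly $\nu=\mn$.

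Next I would compute the Poisson integral of $\nu$. For fixed $z\in\dn$ the function $\za\mapsto P(z,\za)$ lies in $C(\tn)$, so the defining property of $\nu$ gives
\[
P[\nu](z)=\int_\Tbb\int_\tn P(z,\za)\,d\clk_\al(\za)\,dm_1(\al)=\int_\Tbb P[\clk_\al](z)\,dm_1(\al)=\int_\Tbb\Rl\left(\frac{\al+\ph(z)}{\al-\ph(z)}\right)dm_1(\al).
\]
To evaluate the inner integral I would use $|\al|=1$ to write $\frac{\al+\ph(z)}{\al-\ph(z)}=\frac{1+\overline{\al}\ph(z)}{1-\overline{\al}\ph(z)}=1+2\sum_{m\ge1}\overline{\al}^{\,m}\ph(z)^m$, the geometric series converging uniformly in $\al\in\Tbb$ because $|\overline{\al}\ph(z)|=|\ph(z)|<1$. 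Integrating term by term and using $\int_\Tbb\overline{\al}^{\,m}\,dm_1(\al)=0$ for $m\ge1$ leaves $\int_\Tbb\frac{\al+\ph(z)}{\al-\ph(z)}\,dm_1(\al)=1$, whence $P[\nu](z)\equiv1=P[\mn](z)$. Since a measure in $M(\tn)$ is uniquely determined by its Poisson integral (which recovers all of its Fourier coefficients), this yields $\nu=\mn$, the desired identity.

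I expect the work here to be bookkeeping rather than a genuine obstacle: the interchange of $\int_\Tbb$ and $\int_\tn$ is immediate from the definition of $\nu$ once $\nu\in M(\tn)$ is in place, and the termwise integration of the Herglotz series is covered by its uniform convergence in $\al$ for each fixed $z$. The conceptual heart is simply that averaging the kernel $\frac{\al+\ph(z)}{\al-\ph(z)}$ over $\al\in\Tbb$ annihilates every nonconstant mode in $\al$ and collapses to the constant $1$, that is, to $P[\mn]$.

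Finally, I would note an alternative route more in the spirit of Proposition~\ref{p_slices}: slice everything. For each $\za\in\tn$ the slice function $\lambda\mapsto\ph(\lambda\za)$ is holomorphic from $\Dbb$ to $\Dbb$, and matching Poisson integrals identifies the slice measure $(\clk_\al)_\za$ with the classical one-dimensional Clark measure of this slice function. Combining Proposition~\ref{p_slices}, the one-dimensional disintegration theorem of \cite{Aab87}, and the decomposition \eqref{e_Leb_dcm} of $\mn$, applied slicewise, gives the same conclusion; in that approach the delicate points would instead be the slicewise identification and a Fubini argument justified by \eqref{e_muz_norm}.
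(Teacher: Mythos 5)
Your proof is correct, but it takes a genuinely different route from the paper. You define the averaged measure $\nu=\int_{\Tbb}\clk_\al\,dm_1(\al)$ (legitimately, via Corollary~\ref{c_slices_wcont}, the uniform mass bound, and Riesz representation) and then identify $\nu=\mn$ by computing $P[\nu]\equiv 1$ and invoking injectivity of $\mu\mapsto P[\mu]$ on $M(\tn)$ --- the same standard fact underlying Lemma~\ref{l_MS_wlim}, and valid on the polydisc since the Poisson integral recovers all Fourier coefficients of the measure. The paper instead proves the result by slicing: it uses Proposition~\ref{p_slices} to decompose $\clk_\al$ into slice measures, identifies $(\clk_\al)_\za[\ph]$ with the one-dimensional Clark measure $\clk_\al[\ph_\za]$, and reduces to Aleksandrov's one-dimensional disintegration theorem \cite{Aab87} via Fubini and \eqref{e_Leb_dcm}; your second, sketched ``alternative route'' is in fact essentially the paper's proof. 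Your main argument buys self-containedness: it needs neither the slice decomposition with its norm estimate \eqref{e_muz_norm} nor the one-dimensional theorem, only the elementary observation that averaging the Herglotz kernel over $\al\in\Tbb$ gives $1$ --- and even your series manipulation can be shortcut, since for fixed $w=\ph(z)\in\Dbb$ one has $\Rl\bigl((\al+w)/(\al-w)\bigr)=(1-|w|^2)/|\al-w|^2$, which is precisely the one-variable Poisson kernel at $w$ as a function of $\al$ and integrates to $1$ over $\Tbb$. What the paper's route buys in exchange is structural information: it exhibits the $n$-dimensional disintegration as a slicewise consequence of the classical one and showcases the slice machinery of Proposition~\ref{p_slices}, which the paper wants on record anyway.
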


\begin{proof}
Observe that the slice measures $(\clk_\al)_\za$ are defined for all $\al\in\Tbb$, $\za\in\tn$.
The norms of $(\clk_\al)_\za$, $\al\in\Tbb$, $\za\in\tn$, are bounded by a constant $C(I)>0$.
Also, $(\clk_\al)_\za[I]$ coincides with $(\clk_\al)[I_\za]$ and
\begin{equation}\label{e_clk_slice}
\int_{\Tbb}\int_{\tn}
f\, d\clk_\al[I_\za]\, dm_1(\al)
= \int_{\tn} f\, dm_1^\za, \quad f\in C(\tn),
\end{equation}
by the one-dimensional disintegration theorem (see \cite{Aab87}).
So, applying Proposition~\ref{p_slices} Fubini's theorem and \eqref{e_Leb_dcm}, we obtain
\[
\begin{aligned}
  \int_{\Tbb} \int_{\tn} f \, d\clk_\al\, d m_1(\al)
  &= \int_{\Tbb} \int_{\tn} \int_{\tn} f \, d(\clk_\al)_{\za}\,d\mn(\za)\, d m_1(\al)  \\
  &= \int_{\tn}\int_{\Tbb}\int_{\tn} f \, d\clk_\al[I_\za]\, d m_1(\al)\,d\mn(\za) \\
  &= \int_{\tn}\int_{\tn}  f \, d m_1^{\za}\,d\mn(\za) \\
  &= \int_{\tn} f \, d\mn(\za),
\end{aligned}
\]
as required.
\end{proof}

\subsection{Clark measures and Cauchy kernels}
The following lemma is a particular case of Exercise~1
from \cite[Chap.~8]{KrBook}.

\begin{lemma}\label{l_diag}
Let $F$ be a holomorphic function on $\dn\times \dn$.
If $F(z, \overline{z}) =0$ for all $z\in\dn$,
then $F(z, w) =0$ for all $(z, w) \in \dn\times \dn$.
\end{lemma}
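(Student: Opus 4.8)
The plan is to use the product (Reinhardt) structure of the polydisc and expand $F$ as a power series in all $2n$ variables, then extract its coefficients by separating the angular and radial directions. Write
\[
F(z,w) = \sum_{\alpha,\beta\in\Zbb_+^n} c_{\alpha\beta}\, z^\alpha w^\beta,
\]
a series converging absolutely and uniformly on compact subsets of $\dn\times\dn$. Since $\mathbb D$ is conjugation-symmetric, the point $(z,\overline z)$ lies in $\dn\times\dn$, and the hypothesis becomes
\[
\sum_{\alpha,\beta} c_{\alpha\beta}\, z^\alpha \overline{z}^\beta = 0 \quad\textrm{for all } z\in\dn.
\]
The goal is to deduce $c_{\alpha\beta}=0$ for every pair $(\alpha,\beta)$, which forces $F\equiv 0$.

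First I would pass to polar coordinates. Fixing $r=(r_1,\dots,r_n)\in[0,1)^n$ and writing $z_j = r_j e^{i\theta_j}$, so that $\overline{z_j}=r_j e^{-i\theta_j}$, the hypothesis reads
\[
\sum_{\alpha,\beta} c_{\alpha\beta}\, r^{\alpha+\beta}\, e^{i(\alpha-\beta)\cdot\theta} = 0
\quad\textrm{for all } \theta\in\tn,
\]
where $r^{\alpha+\beta}=\prod_j r_j^{\alpha_j+\beta_j}$ and $(\alpha-\beta)\cdot\theta=\sum_j(\alpha_j-\beta_j)\theta_j$. The left-hand side is an absolutely convergent Fourier series on $\tn$ that vanishes identically, so each of its Fourier coefficients is zero. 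Grouping the terms according to the value $m=\alpha-\beta\in\Zbb^n$ therefore gives
\[
\sum_{\alpha-\beta=m} c_{\alpha\beta}\, r^{\alpha+\beta} = 0
\quad\textrm{for every } m\in\Zbb^n \textrm{ and every } r\in[0,1)^n.
\]

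The decisive observation is that, for a fixed $m$, the exponents $\alpha+\beta$ occurring in this sum are pairwise distinct: on the solution set $\{(\alpha,\beta)\in\Zbb_+^n\times\Zbb_+^n:\alpha-\beta=m\}$ one has $\alpha+\beta = m+2\beta$, so distinct admissible $\beta$ yield distinct monomials $r^{\alpha+\beta}$ in the variable $r$. Hence the last identity is a power series in $r$ vanishing on $[0,1)^n$, so all its coefficients vanish by uniqueness of power-series expansions; that is, $c_{\alpha\beta}=0$ for every pair with $\alpha-\beta=m$. Letting $m$ range over $\Zbb^n$ gives $c_{\alpha\beta}=0$ for all $\alpha,\beta$, and therefore $F\equiv 0$.

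There is no serious obstacle here; conceptually the anti-diagonal $\{(z,\overline z):z\in\dn\}$ is a totally real submanifold of maximal real dimension in $\dn\times\dn$, and the statement is the standard fact that a holomorphic function vanishing on such a set must vanish identically. The only point requiring care is the bookkeeping that separates the angular variables, handled by Fourier uniqueness on $\tn$, from the radial variables, handled by uniqueness of Taylor coefficients; in particular the remark that fixing $\alpha-\beta$ renders the remaining exponents $\alpha+\beta$ injective in $\beta$.
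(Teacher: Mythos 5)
Your proof is correct, but there is nothing in the paper for it to coincide with: the paper gives no proof of Lemma~\ref{l_diag} at all, dispatching it as a particular case of Exercise~1 of Chapter~8 in Krantz's book. The argument intended there is essentially the one you gesture at in your closing remark: the anti-diagonal $\{(z,\overline{z}): z\in\dn\}$ is a maximal totally real submanifold of $\dn\times\dn$, and one complexifies by setting $H(x,y)=F(x+iy,\,x-iy)$, a holomorphic function of $(x,y)$ on the open set where both arguments lie in $\dn$; by hypothesis $H$ vanishes at all real points of that set, so all Taylor coefficients of $H$ at a real point vanish, whence $H\equiv 0$ and $F\equiv 0$. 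Your coefficient-extraction argument is a genuinely different, more elementary route that exploits the Reinhardt structure of the polydisc: absolute convergence of the double Taylor series on $\dn\times\dn$ licenses the substitution $w=\overline{z}$ and the regrouping by $m=\alpha-\beta$; Fourier uniqueness on $\tn$ isolates, for each $m$, the radial series $\sum_{\beta} c_{m+\beta,\beta}\, r^{m+2\beta}$; and your key observation that $\beta\mapsto m+2\beta$ is injective (with $\alpha=m+\beta$ forced) reduces the problem to uniqueness of power-series coefficients. The complexification argument is shorter and yields the general totally-real statement; yours is self-contained, needing only Abel's lemma, Fourier uniqueness, and Taylor uniqueness, at the cost of bookkeeping. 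One point worth making explicit in your last step: the identity $\sum_{\beta} c_{m+\beta,\beta}\, r^{m+2\beta}=0$ is given on $[0,1)^n$, and since the series converges absolutely there it defines a real-analytic function on $(-1,1)^n$ vanishing on an open set, so the identity theorem for real-analytic functions (or an induction on the number of variables using one-variable uniqueness of power series) is what your appeal to ``uniqueness of power-series expansions'' rests on --- and it is sound.
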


\begin{proposition}\label{p_cauchy_dbl}
Let $\ph: \dn\to\Dbb$, $d\ge 2$, be a holomorphic function and
let $\clk_\al = \clk_\al[\ph]$, $\al\in \Tbb$.
 Then
  \[
  \int_{\tn} C(z, \za) C(\za, w)\, d\clk_\al(\za) =
  \frac{1- \ph(z)\overline{\ph(w)}}{(1-\overline{\al}{\ph(z)})(1-\al\overline{\ph(w)})} C(z,w)
  \]
for all $\al\in\Tbb$, $z, w \in\dn$.
\end{proposition}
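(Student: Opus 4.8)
The plan is to reduce the claimed identity to its diagonal $w=z$, where it follows at once from the definition of $\clk_\al$, and then to propagate it off the diagonal by means of Lemma~\ref{l_diag}. Set
\[
G(z,w) = \int_{\tn} C(z,\za) C(\za, w)\, d\clk_\al(\za), \qquad
R(z,w) = \frac{1 - \ph(z)\overline{\ph(w)}}{(1-\overline{\al}\ph(z))(1-\al\overline{\ph(w)})} C(z,w).
\]
First I would record that both are well defined, holomorphic in $z$ and anti-holomorphic in $w$. For $G$ this holds because $\clk_\al$ is a finite positive measure, while for $(z,w)$ in a compact subset of $\dn\times\dn$ the factors $1 - z_j\overline{\za}_j$ and $1 - \za_j\overline{w}_j$ stay bounded away from zero uniformly in $\za\in\tn$; hence the integrand is bounded and one may differentiate under the integral sign (or invoke Morera together with Fubini). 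For $R$ the denominators never vanish since $|\ph(z)|<1$ on $\dn$ and $|\al|=1$, so the same holomorphy/anti-holomorphy structure is evident.

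Next I would evaluate on the diagonal. Using $C(z,\za)C(\za,z) = C(z,z)\,P(z,\za)$ from the definition of the Poisson kernel,
\[
G(z,z) = C(z,z)\int_{\tn} P(z,\za)\, d\clk_\al(\za) = C(z,z)\, P[\clk_\al](z) = C(z,z)\,\frac{1-|\ph(z)|^2}{|\al-\ph(z)|^2}.
\]
On the other hand, since $\al-\ph(z) = \al(1-\overline{\al}\ph(z))$ and $|\al|=1$, one has $|\al-\ph(z)|^2 = (1-\overline{\al}\ph(z))(1-\al\overline{\ph(z)})$, whence $R(z,z) = G(z,z)$ for every $z\in\dn$.

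Finally, to pass from the diagonal to all $(z,w)$, put $H = G - R$ and define $F(z,w) = H(z,\overline{w})$. Replacing $w$ by $\overline{w}$ turns the anti-holomorphic dependence on $w$ into a holomorphic one, and $\overline{w}\in\dn$ whenever $w\in\dn$, so $F$ is holomorphic on $\dn\times\dn$. By the diagonal computation, $F(z,\overline{z}) = H(z,z)=0$ for all $z\in\dn$, and Lemma~\ref{l_diag} then gives $F\equiv0$, that is $H\equiv0$, which is precisely the assertion.

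The main obstacle I anticipate is purely bookkeeping: tracking the conjugations so that the substitution $w\mapsto\overline{w}$ converts $H$ into an honestly joint-holomorphic $F$ on all of $\dn\times\dn$ to which Lemma~\ref{l_diag} applies. The analytic content — convergence and holomorphy of the integral defining $G$, and the diagonal evaluation — is routine once the definition of $\clk_\al$ through its Poisson integral is invoked.
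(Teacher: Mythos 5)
Your proof is correct and follows exactly the paper's route: evaluate the integral on the diagonal $w=z$ via the defining Poisson-integral identity $P[\clk_\al](z)=\frac{1-|\ph(z)|^2}{|\al-\ph(z)|^2}$, then extend off the diagonal with Lemma~\ref{l_diag}. The paper leaves the conjugation bookkeeping and the holomorphy of the integral implicit, whereas you spell these out (the substitution $F(z,w)=H(z,\overline{w})$ and the factorization $|\al-\ph(z)|^2=(1-\overline{\al}\ph(z))(1-\al\overline{\ph(z)})$), all of which is accurate.
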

\begin{proof}
  The equality
  \[
  \int_{\tn} P(z, \za) \, d\clk_\al(\za) = \frac{1-|\ph(z)|^2}{|\al- \ph(z)|^2}, \quad z\in \dn,
  \]
  and the definition of $P(z,\za)$ provide
   \[
  \int_{\tn} C(z, \za) C (\za, z)\, d\clk_\al(\za) = \frac{1-|\ph(z)|^2}{|\al- \ph(z)|^2} C(z,z), \quad z\in \dn.
  \]
  It remains to apply Lemma~\ref{l_diag}.
\end{proof}

\begin{corollary}\label{c_cauchyof_clk}
Let $\ph: \dn\to\Dbb$, $d\ge 2$, be a holomorphic function
and let $\clk_\al = \clk_\al[\ph]$, $\al\in \Tbb$.
Then
\[
(\clk_\al)_+(z) = \frac{1}{1-\overline{\al} \ph(z)} + \frac{\al\overline{\ph(0)}}{1-\al\overline{\ph(0)}}
\]
for all $\al\in\Tbb$, $z\in\dn$.
\end{corollary}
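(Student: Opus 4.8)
The plan is to read off the Cauchy transform directly from Proposition~\ref{p_cauchy_dbl} by specializing the second variable to the origin. Recalling that the Cauchy transform is $(\clk_\al)_+(z) = \int_{\tn} C(z, \za)\, d\clk_\al(\za)$, I would set $w = 0$ in the double-kernel identity. Since $C(\za, 0) = \prod_{j=1}^{n}(1 - \za_j \cdot \overline{0})^{-1} = 1$ and likewise $C(z, 0) = 1$, the integrand on the left collapses to $C(z, \za)$, so the left-hand side becomes exactly $(\clk_\al)_+(z)$, while the right-hand side reduces to
\[
\frac{1 - \ph(z)\overline{\ph(0)}}{(1 - \overline{\al}\ph(z))(1 - \al\overline{\ph(0)})}.
\]

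It then remains to verify that this rational expression agrees with the claimed sum. Here I would use that $\al \in \Tbb$, so $\al \overline{\al} = 1$, to rewrite the numerator as $1 - (\overline{\al}\ph(z))(\al\overline{\ph(0)})$. Setting $a = \overline{\al}\ph(z)$ and $b = \al\overline{\ph(0)}$, the identity to be checked reduces to the elementary fact
\[
\frac{1 - ab}{(1 - a)(1 - b)} = \frac{1}{1 - a} + \frac{b}{1 - b},
\]
which follows upon placing the right-hand side over the common denominator $(1 - a)(1 - b)$ and simplifying the numerator $(1 - b) + b(1 - a) = 1 - ab$. Substituting $a$ and $b$ back then yields the asserted formula.

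I do not expect any genuine obstacle in this argument, since all the analytic content is already packaged in Proposition~\ref{p_cauchy_dbl}. The only points requiring minor care are the evaluation $C(\,\cdot\,, 0) \equiv 1$ of the Cauchy kernel at the origin, which makes the specialization $w = 0$ trivially collapse the double integral to the single Cauchy transform, and the use of $|\al| = 1$ to factor the numerator so that the rational function splits as claimed. The dimension hypothesis $d \ge 2$ is inherited from Proposition~\ref{p_cauchy_dbl} and plays no independent role in the present corollary.
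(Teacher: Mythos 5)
Your proposal is correct and follows the paper's own proof exactly: the paper's entire argument is ``Apply Proposition~\ref{p_cauchy_dbl} with $w=0$,'' and you have simply spelled out the details, namely that $C(\,\cdot\,,0)\equiv 1$ collapses the identity to a formula for $(\clk_\al)_+(z)$, and that the partial-fraction split $\frac{1-ab}{(1-a)(1-b)}=\frac{1}{1-a}+\frac{b}{1-b}$ with $a=\overline{\al}\ph(z)$, $b=\al\overline{\ph(0)}$ (using $|\al|=1$) yields the stated form. All steps check out.
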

\begin{proof}
  Apply Proposition~\ref{p_cauchy_dbl} with $w=0$.
\end{proof}

\section{Clark measures for inner functions}\label{s_clk_inner}
In this section, we assume that $\ph$ is an inner function.
So, we use the symbol $I$ in the place of $\ph$.

\subsection{Abstract approach to isometries $T_\al: \kla \to L^2(\clk_\al)$}
Let $I$ be an inner function in $\dn$.
Given $f, g\in \kla$, we claim that
\begin{equation}\label{e_L2ort}
\int_{\tn}f \overline{g} \overline{I}^k\, d\mn =0\quad\textrm{for all}\ k\in \Zbb\setminus \{0\}.
\end{equation}
Indeed, without loss of generality, we may assume that $k\ge 1$.
Observe that $ f \overline{g} \bot I^k$ if and only if $f \bot I^k g$.
So, it suffices to apply the definition of $\kla$.

Now assume that $f, g\in \kla\cap C(\tn)$.
Recall that $I = \al$ $\clk_\al$-a.e.
Hence, combining \eqref{e_L2ort} and Proposition~\ref{t_desint}, we obtain
\[
0 = \int_{\Tbb}
\int_{\tn}  f\overline{g} \overline{I}^k\, d\clk_\al\, d m_1(\al)
= \int_{\Tbb} \overline{\al}^k
\int_{\tn}  f\overline{g}\, d\clk_\al\, d m_1(\al) \quad \textrm{for all}\ k\in \Zbb\setminus \{0\}.
\]
Therefore,
\[
\int_{\tn}  f\overline{g}\, d\clk_\al =\textrm{const for\ } m_1\textrm{-a.e.\ } \al\in\Tbb.
\]
In fact, applying Corollary~\ref{c_slices_wcont}, we conclude that the above property holds for all $\al\in\Tbb$.
So, we have
\[
\int_{\tn} f\overline{g}\, d\mn = \int_{\tn}  f\overline{g}\, d\clk_\al
\]
for all $\al\in\Tbb$.
Thus, given an $\al\in\Tbb$, we have an isometry $T_\al: \kla\to L^2 (\clk_\al)$.

\subsection{Constructive approach to isometries $T_\al: \kla \to L^2(\clk_\al)$}
We have
\[
C(\za, z) = (1- \overline{I(z)} I(\za))C(\za, z)
+ \overline{I(z)} I(\za) C(\za, z)
\in \kla\oplus I H^2
\]
as functions of $\za$. In other words,
\[
K(z, \za) = \frac{1-I(z)\overline{I(\za)}}{\prod_{j=1}^n (1-z_j\overline{\za_j})}
= (1-I(z)\overline{I(\za)}) C(z, \za)
\]
is the reproducing kernel for $\kla$.

Put
$K_w(z) = K(z, w)$ and define
\[
(T_\al K_w)(\xi) = \frac{1-\al \overline{I(w)}}{\prod_{j=1}^n (1-\xi_j\overline{w_j})}
= (1-\al \overline{I(w)}) C(\xi, w), \quad \xi\in\tn.
\]

\begin{theorem}\label{t_kla}
For each $\al\in\Tbb$, $T_\al$ has a unique extension
to an isometric operator from $\kla$ into $L^2(\clk_\al)$.
\end{theorem}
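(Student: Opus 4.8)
The plan is to verify that $T_\al$ preserves inner products on the reproducing kernels $K_w$, $w\in\dn$, and then to extend by density. Since $K(z,\za)=(1-I(z)\overline{I(\za)})C(z,\za)$ is the reproducing kernel of $\kla$, the linear span of $\{K_w:w\in\dn\}$ is dense in $\kla$: if $f\in\kla$ is orthogonal to every $K_w$, then $f(w)=\langle f,K_w\rangle=0$ for all $w$, whence $f=0$. Moreover each $T_\al K_w$ is continuous on $\tn$ (for fixed $w\in\dn$ the map $\za\mapsto C(\za,w)$ has no singularities on $\tn$), so $T_\al K_w\in L^2(\clk_\al)$ because $\clk_\al$ is a finite measure. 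Hence it suffices to match the two Gram matrices on the spanning set.

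First I would compute the target inner product. Using $\overline{C(\xi,z)}=C(z,\xi)$ for $\xi\in\tn$, one has $\overline{(T_\al K_z)(\xi)}=(1-\overline{\al}I(z))C(z,\xi)$, so that
\[
\langle T_\al K_w, T_\al K_z\rangle_{L^2(\clk_\al)}
=(1-\al\overline{I(w)})(1-\overline{\al}I(z))\int_{\tn}C(z,\xi)C(\xi,w)\,d\clk_\al(\xi).
\]
The key input is Proposition~\ref{p_cauchy_dbl}, which evaluates the integral on the right as $\dfrac{1-I(z)\overline{I(w)}}{(1-\overline{\al}I(z))(1-\al\overline{I(w)})}\,C(z,w)$. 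The two prefactors cancel exactly against this denominator, leaving $(1-I(z)\overline{I(w)})C(z,w)=K(z,w)$.

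On the source side, the reproducing property gives $\langle K_w, K_z\rangle_{\kla}=K_w(z)=K(z,w)$, which agrees with the value just obtained. Consequently, for every finite combination $f=\sum_i c_i K_{w_i}$ I get $\|T_\al f\|_{L^2(\clk_\al)}^2=\sum_{i,j}c_i\overline{c_j}\langle T_\al K_{w_i},T_\al K_{w_j}\rangle=\sum_{i,j}c_i\overline{c_j}K(w_j,w_i)=\|f\|_{\kla}^2$. In particular $T_\al f=0$ in $L^2(\clk_\al)$ whenever $f=0$ in $\kla$, so $T_\al$ is a well-defined isometry on the dense span of kernel functions. It therefore extends uniquely to an isometric operator $T_\al:\kla\to L^2(\clk_\al)$, and uniqueness of the extension is forced by continuity together with the density established above.

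The computation is essentially forced once Proposition~\ref{p_cauchy_dbl} is available, so I do not anticipate a genuine obstacle. The points needing care are the conjugation bookkeeping (tracking $\al$ versus $\overline{\al}$ and the identity $\overline{C(\xi,z)}=C(z,\xi)$) and confirming that the two prefactors cancel precisely the denominator produced by the proposition; the density of the kernel functions is the standard reproducing-kernel fact and is the only ingredient beyond the cited proposition.
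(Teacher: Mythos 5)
Your proposal is correct and follows essentially the same route as the paper: both compute $(T_\al K_w, T_\al K_z)_{L^2(\clk_\al)}$ via Proposition~\ref{p_cauchy_dbl}, observe that the prefactors cancel to give $K(z,w)=(K_w,K_z)_{H^2}$, and then extend by density of the span of $\{K_w\}_{w\in\dn}$ in $\kla$. Your additional details (justifying the density, checking $T_\al K_w\in L^2(\clk_\al)$, and the explicit Gram-matrix argument for well-definedness) are correct elaborations of steps the paper leaves implicit.
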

\begin{proof}
Fix an $\al\in\Tbb$.
Applying Proposition~\ref{p_cauchy_dbl}, we obtain
  \begin{align*}
    (T_\al K_w, T_\al K_z)_{L^2(\clk_\al)}
    &=\int_{\tn} (1-\al \overline{I(w)}) C(\za, w) (1-\overline{\al} I(z)) C(z, \za)\, d\clk_\al(\za) \\
    &=(1-\al \overline{I(w)})(1-\overline{\al} I(z)) \int_{\tn} C(\za, w) C(z, \za)\, d\clk_\al(\za) \\
    &= (1- I(z) \overline{I(w)}) C(z, w)\\
    &= K(z,w) = (K_w, K_z)_{H^2}.
  \end{align*}
So, $T_\al$ extends to an isometric embedding of $\kla$ into $L^2(\clk_\al)$.
Since the linear span of the family $\{K_w\}_{w\in\dn}$ is dense in $\kla$,
the extension is unique.
\end{proof}

By definition, the polydisc algebra $A(\dn)$ consists of those $f\in C(\overline{\dn})$ which are holomorphic in $\dn$.

\begin{theorem}\label{t_tal_on}
Let $I$ be an inner function in $\dn$, $n\ge 2$, $\al\in\Tbb$.
Then the following properties are equivalent:
\begin{enumerate}
  \item[(i)] $T_\alpha$ is a unitary operator;
  \item[(ii)] $(f\clk_\al)_+ \not\equiv 0$ for any $f\in L^2(\clk_\al)$, $f\not\equiv 0$;
  \item[(iii)] the polydisc algebra $A(\dn)$ is dense in $L^2(\clk_\al)$.
\end{enumerate}
\end{theorem}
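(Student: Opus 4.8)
The plan is to reduce all three conditions to a single statement about the range of the isometry $T_\al$. Since $T_\al$ is an isometry of the Hilbert space $\kla$, its range is closed and equals the closed linear span in $L^2(\clk_\al)$ of the vectors $T_\al K_w = (1-\al\overline{I(w)})C(\cdot, w)$, $w\in\dn$. Because $|\al|=1$ and $|I(w)|<1$, we have $\al\overline{I(w)}\neq 1$, so the scalar $1-\al\overline{I(w)}$ is nonzero; hence the range coincides with the closed span $\mathcal{C}$ of the Cauchy kernels $\{C(\cdot, w): w\in\dn\}$ (each of which lies in $C(\tn)\subset L^2(\clk_\al)$, its denominator being bounded away from $0$). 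Thus (i) holds if and only if $\mathcal{C}=L^2(\clk_\al)$, and the task becomes to describe $\mathcal{C}^\perp$.

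First I would compute, for $f\in L^2(\clk_\al)$ and $w\in\dn$, the inner product $(f, C(\cdot, w))_{L^2(\clk_\al)}$. Using $\overline{C(\za,w)}=C(w,\za)$ together with the definition of the Cauchy transform, this equals $(f\clk_\al)_+(w)$. Expanding the kernel as the geometric series $C(w,\za)=\sum_{k\in\Zbb_+^n}w^k\overline{\za}^k$, which converges uniformly in $\za\in\tn$ for fixed $w\in\dn$, and integrating term by term against the finite measure $f\clk_\al$, I obtain $(f\clk_\al)_+(w)=\sum_{k\in\Zbb_+^n}w^k\int_{\tn}f\overline{\za}^k\,d\clk_\al$. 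Consequently $f\perp\mathcal{C}$ if and only if $(f\clk_\al)_+\equiv0$, if and only if $\int_{\tn}f\overline{\za}^k\,d\clk_\al=0$ for every $k\in\Zbb_+^n$, that is, if and only if $f$ is orthogonal to every analytic monomial $\za^k$.

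The equivalence (i)$\Leftrightarrow$(ii) is then immediate: $T_\al$ fails to be unitary exactly when $\mathcal{C}^\perp\neq\{0\}$, i.e.\ when some $f\not\equiv0$ satisfies $(f\clk_\al)_+\equiv0$; negating this gives (ii). For (i)$\Leftrightarrow$(iii), the computation above shows that $\mathcal{C}^\perp$ coincides with the orthogonal complement of the analytic polynomials, so $\mathcal{C}$ is precisely the $L^2(\clk_\al)$-closure of the analytic polynomials. Since the analytic polynomials are dense in $A(\dn)$ in the supremum norm and $A(\dn)\subset C(\tn)$ embeds continuously into $L^2(\clk_\al)$, this closure equals the $L^2(\clk_\al)$-closure of $A(\dn)$. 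Therefore $\mathcal{C}=L^2(\clk_\al)$ if and only if $A(\dn)$ is dense in $L^2(\clk_\al)$, which is (iii).

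The routine points to verify are the conjugation identity $\overline{C(\za,w)}=C(w,\za)$ and the term-by-term integration of the Cauchy kernel (legitimate by uniform convergence on $\tn$ and finiteness of $\clk_\al$); the one genuinely structural input is the classical fact that $A(\dn)$ is the uniform closure of the analytic polynomials, which is what lets me identify the two $L^2$-closures. I expect the main conceptual step to be the identification of the range of $T_\al$ with $\mathcal{C}$ and of $\mathcal{C}$ with the closure of $A(\dn)$; once both descriptions are in place, the two equivalences drop out of the single formula for $\mathcal{C}^\perp$.
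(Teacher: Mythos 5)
Your proof is correct, but it is organized differently from the paper's, and the difference is instructive. The paper proves the cycle (i)$\Rightarrow$(ii)$\Rightarrow$(iii)$\Rightarrow$(i). For (i)$\Rightarrow$(ii) it uses the reproducing kernel of $\kla$ together with unitarity of $T_\al$ to obtain the inversion formula
\[
f(z)=(1-\overline{\al}I(z))\,(f\clk_\al)_+(z),\qquad f\in\kla,\ z\in\dn,
\]
so that $(f\clk_\al)_+\equiv 0$ forces $f\equiv 0$; the step (ii)$\Rightarrow$(iii) is your inner-product identity $(f,C(\cdot,w))_{L^2(\clk_\al)}=(f\clk_\al)_+(w)$ applied to $h(\za)=C(\za,z)\in A(\dn)$; and (iii)$\Rightarrow$(i) is dispatched in one line by asserting that density of $A(\dn)$ makes the family $(1-\al\overline{I(w)})C(\cdot,w)$, $w\in\dn$, dense, so that the isometry of Theorem~\ref{t_kla} is onto. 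You instead prove a single structural statement --- the range of $T_\al$ is the closed span $\mathcal{C}$ of the Cauchy kernels, and $\mathcal{C}^\perp=\{f:(f\clk_\al)_+\equiv 0\}$ coincides, via the Taylor-coefficient (moment) expansion, with the orthocomplement of the analytic polynomials, so that $\mathcal{C}$ is the $L^2(\clk_\al)$-closure of $A(\dn)$ --- and both equivalences then fall out of one formula. What the paper's route buys is the explicit inversion formula displayed above, a polydisc analog of Clark's classical formula that concretely identifies $T_\al^{-1}$ on $\kla$; your argument never produces it. What your route buys is economy and, in one spot, completeness: your moment expansion combined with the uniform density of polynomials in $A(\dn)$ is precisely the justification that the paper's terse (iii)$\Rightarrow$(i) step requires, since density of $A(\dn)$ implies density of the kernel family exactly because any $f$ orthogonal to all Cauchy kernels has all Taylor coefficients of $(f\clk_\al)_+$ equal to zero, hence is orthogonal to all monomials and therefore to $A(\dn)$. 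So your writeup is a valid alternative that also makes explicit the one point the paper leaves to the reader.
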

\begin{proof}
(i)$\Rightarrow$(ii)
Let $f\in \kla$ and $\al\in\Tbb$.
Since $K(z, \cdot) \in \overline{\kla}$ and $T_\al : \kla\to L^2(\clk_\al)$ is unitary,
we obtain
\[
\begin{aligned}
f(z)
&= \int_{\tn} f(\za) K(z, \za)\, d\mn(\za) \\
&= \int_{\tn} f(\za) K(z, \za)\, d\clk_\al(\za) \\
&= \int_{\tn} (1- \overline{\al} I(z)) f(\za) C(z, \za)\, d\clk_\al(\za) \\
&= (1- \overline{\al} I(z)) (f\clk_\al)_+(z).
\end{aligned}
\]
Now, assume that $(f\clk_\al)_+ \equiv 0$. Then $f\equiv 0$ as an element of $\kla$ or,
equivalently, as an element of $ L^2(\clk_\al)$.
So (i) implies (ii).

(ii)$\Rightarrow$(iii)
Assume that (iii) does not hold.
Then there exists $f\in L^2(\clk_\al)$, $f\not\equiv 0$, such that
\[
\int_{\tn} f \overline{h}\, d\clk_\al =0
\]
for all $h\in A(\dn)$, in particular, for $h(\za)= C(\za, z)$, $z\in \dn$.
So $(f\clk_\al)_+ \equiv 0$ and we arrive to a contradiction.

(iii)$\Rightarrow$(i)
If (iii) holds, then the family
\[
\{(1-\al \overline{I(w)}) C(\xi, w)\}_{w\in \dn}
\]
is dense in $L^2(\clk_\al)$.
So, we continue the proof of Theorem~\ref{t_kla}
and conclude that $T_\al$ is onto,
that is, $T_\al$ extends to a unitary operator.
\end{proof}

\begin{rem}
Theorem~\ref{t_tal_on} also holds for the inner functions in the unit ball $B_n$ of $\Cbb^n$, $n\ge 1$.
However, it degenerates: the ball algebra $A(B_n)$ is dense in $L^2(\clk_\al[I])$
for any inner function $I$ and any $\al\in\Tbb$; see \cite{ADcras}.
This is not the case for various inner functions in $\dn$; see Section~\ref{s_ex}.
\end{rem}

\section{Examples}\label{s_ex}

In this section, we use the symbol $m$ in the place of $m_1$ to denote the normalized Lebesgue measure on $\Tbb$.

\subsection{All $T_\al$ are not unitary}
Let $I(z) =z_1$, $z\in\dn$, $n\ge 2$. For $\al\in\Tbb$,
the measure $\clk_\al= \clk_\al[I]$, $\al\in\Tbb$, is supported by the set
$K_\al = \{\za\in\tn: I(\za) =\al\} = \{\za\in\tn: \za_1 =\al\}$.
Also, we have
\[
\clk_\al = \delta_\al(\za_1) \otimes m(\za_2) \otimes\dots \otimes m(\za_n).
\]
Clearly, the polydisc algebra $A(\dn)$ is not dense in $L^2(\clk_\al)$,
so $T_\al$ is not unitary for all $\al\in\Tbb$.

\subsection{All $T_\al$ are unitary}
Let $I(z)=z_1 z_2$, $z\in\Dbb^2$.
For $\al\in\Tbb$, the measure $\clk_\al= \clk_\al[I]$
is supported by $K_\al = \{(\xi, \al\overline{\xi}): \xi\in\Tbb\}$.
Now, let $P_z(\za)$ denote the Poisson kernel $P(z, \za)$, $z\in \Dbb^2$, $\za\in \Tbb^2$.
On the one hand, the Poisson integral formula in dimension one and the definition of $\clk_\al$
guarantee that
\[
 \int_{\Tbb} \frac{1-|z_1^2|}{|z_1 - \xi|^2} \frac{1- |z_2\xi|^2}{|z_2 \xi - \al|^2}\, d m_1(\xi)
= \frac{1-|z_1 z_2|^2}{|\al -z_1 z_2|^2}
= \int_{\Tbb^2} P_z(\za)\, d\clk_\al(\za).
\]
On the other hand, the same integral is equal to
\[
 \int_{\Tbb} \frac{1-|z_1^2|}{|z_1 - \xi|^2} \frac{1- |z_2|^2}{|z_2 - \al\overline{\xi}|^2}\, d m_1(\xi)
= \int_{\Tbb} P_z(\xi, \al\overline{\xi})\, d m_1(\xi).
\]
Therefore,
\[
\int_{\Tbb^2} f(\za)\, d\clk_\al(\za)  =  \int_{\Tbb} f(\xi, \al\overline{\xi})\, d m_1(\xi)
\]
for all $f\in C(\Tbb^2)$.
The functions $f(\xi, \al\overline{\xi})$, $f$ is a holomorphic monomial,
are dense in $L^2(m_1)$, hence, $A(\Dbb^2)$ is dense in $L^2(\clk_\al)$.
So, $T_\al$ is a unitary operator for any $\al\in \Tbb$.

\subsection{All $T_\al$ are unitary operators except $T_{-1}$}
Put
\[
I(z) = \frac{z_1 + z_2 + 2z_1 z_2}{z_1 + z_2 +2}, \quad z\in\Dbb^2.
\]
Observe that $\left|I(z) \overline{z_1} \overline{z_2}\right| =1$, hence, $I$ is an inner function.
For $\al\in\Tbb$, $\al\neq -1$, direct computations show that
the measure $\clk_\al= \clk_\al[I]$ is supported by the set
\[
K_\al = \{(\xi, \overline{b_\al(\xi))}: \xi\in\Tbb\},
\]
where
\[
b_\al(\xi) = \frac{1+ 2\xi -\al}{(\al-1) \xi + 2\al}, \quad \xi\in\Tbb.
\]
Also, the density of $\clk_\al$ with respect to $\xi$
has a zero for $\xi=-1$, that is, at the point $(-1, -1)\in\Tbb^2$.

If $\al\neq -1$, then $|b_\al(\xi)|=1$ for all $\xi\in\Tbb$.
So, $b_\al$ is a Blaschke factor; in particular, $b_1(\xi)=\xi$, $\xi\in\Tbb$.
Hence, $f(\xi, \overline{b_\al}(\xi))$, $f\in A(\Dbb^2)$, is dense in $L^2(\Tbb)$.
In other words,
$A(\Dbb^2)$ is dense in $L^2(\clk_{\al})$ and $T_\al$ is a unitary operator for $\al\neq -1$.

The point $\al=-1$ is exceptional:
\[
\aligned
\Rl\left( \frac{\al +I}{\al -I} \right) &= \Rl\left( \frac{1-I}{1+I} \right)
 = \Rl\left( \frac{1- z_1 z_2}{(z_1 +1)(z_2 +1)} \right) \\
&=  \Rl\left( \frac{1}{z_1 +1} -\frac{1}{2} + \frac{1}{z_2 +1} - \frac{1}{2} \right)
= \frac{1}{2}\left( \frac{1- |z_1|^2}{|z_1+1|^2} + \frac{1- |z_2|^2}{|z_2+1|^2} \right).
\endaligned
\]
Hence, $K_{-1} = \{(\xi, -1): \xi\in\Tbb\} \cup \{(-1, \xi): \xi\in\Tbb\}$ and
\[
\clk_{-1} = \delta_{-1} \otimes m(z_2) + m(z_1) \otimes \delta_{-1}.
\]
Clearly, the bidisc algebra $A(\Dbb^2)$ is not dense in $L^2(\clk_{-1})$,
so the operator $T_{-1}$ is not unitary.

\end{document}